\newtheorem{lemma}{Lemma}
\newtheorem{theorem}{Theorem}
\theoremstyle{remark}
\newcommand{\qbin}[2]{\genfrac{[}{]}{0pt}{}{#1}{#2}}
\renewcommand{\pmod}[1]{\;\allowbreak(\operatorname{mod}#1)}
\begin{document}

\title{The method of creative microscoping}

\author{Wadim Zudilin\thanks{This work is supported by JSPS Invitational Fellowships for Research in Japan (fellowship S19126).
The text is based on my talk at the conference ``Analytic Number Theory and Related Topics''
(Research Institute for Mathematical Sciences, Kyoto University,
Japan, 15--18~October 2019).}\\[1mm]{\small Department of Mathematics, IMAPP, Radboud University,}\\[-1.2mm]
{\small PO Box 9010, 6500~GL Nijmegen, Netherlands}}

\date{December 2019}

\maketitle

\begin{abstract}
Ramanujan's formulae for $1/\pi$ and their generalisations remain an amazing topic, with many mathematical challenges.
Recently it was observed that the formulae possess spectacular `supercongruence' counterparts.
For example, truncating the sum in Ramanujan's formula
$$
\sum_{k=0}^\infty\frac{\binom{4k}{2k}{\binom{2k}{k}}^2}{2^{8k}3^{2k}}\,(8k+1)=\frac{2\sqrt{3}}{\pi}
$$
to the first $p$ terms correspond to the congruence
$$
\sum_{k=0}^{p-1}\frac{\binom{4k}{2k}{\binom{2k}{k}}^2}{2^{8k}3^{2k}}\,(8k+1)\equiv p\biggl(\frac{-3}p\biggr)\pmod{p^3}
$$
valid for any prime $p>3$.
Some supercongruences were shown to be true through a tricky use of classical hypergeometric identities or the Wilf--Zeilberger method of creative telescoping.
The particular example displayed above (and many other entries) were resistant to such techniques.
In joint work with Victor Guo we develop a new method of `creative microscoping' that provides conseptual reasons for and  simultaneous proofs of both the underlying Ramanujan's formula and its finite supercongruence counterparts.
The main ingredient is an asymptotic analysis of suitable $q$-deformations of Ramanujan's formulas at all roots of unity.
Here we outline the method on the concrete example of supercongruence above.
\end{abstract}


More than a century ago \cite{Ra14}, Ramanujan produced a list of rapidly convergent series to $1/\pi$.
Those give one a practical way to calculate $\pi$ with high accuracy, a way tested to be highly efficient in the computer era.
Mathematically, Ramanujan's formulae were analysed and proven by a diverse range of methods, and numerous generalisations were found through their study.
A typical example on Ramanujan's original list is the following identity.

\begin{theorem}[Ramanujan]
\label{th1}
The following equality is true\textup:
\begin{equation}
\sum_{k=0}^\infty\frac{\binom{4k}{2k}{\binom{2k}{k}}^2}{2^{8k}3^{2k}}\,(8k+1)
=\frac{2\sqrt{3}}{\pi}.
\label{ram1}
\end{equation}
\end{theorem}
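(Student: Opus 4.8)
The plan is to recast the series in hypergeometric form and then reduce the evaluation to the classical theory of complete elliptic integrals at a singular value. First I would rewrite the summand: since $\binom{4k}{2k}\binom{2k}{k}^2=(4k)!/k!^4=256^k\,(1/4)_k(1/2)_k(3/4)_k/k!^3$ and $256^k/2^{8k}=1$, the identity \eqref{ram1} becomes
\[
\sum_{k=0}^\infty\frac{(1/4)_k(1/2)_k(3/4)_k}{k!^3}\,(8k+1)\,9^{-k}=\frac{2\sqrt3}{\pi}.
\]
Introducing the analytic function $F(z)={}_3F_2(1/4,1/2,3/4;1,1;z)$, the left-hand side is exactly $\bigl(F(z)+8zF'(z)\bigr)|_{z=1/9}$, so it suffices to evaluate $F$ and $F'$ at $z=1/9$. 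Keeping $z$ a free variable until the very last step is, I think, essential: it converts an isolated numerical identity into a statement about a single transcendental function, which I can then attack with differential-equation and modular techniques.

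Next I would apply Clausen's formula. With $a=1/8$, $b=3/8$ one has $a+b+\tfrac12=1$ and $2a+2b=1$, so
\[
F(z)={}_2F_1(1/8,3/8;1;z)^2=:g(z)^2,
\]
whence $F+8zF'=g^2+16zgg'$. A quadratic transformation of Gauss type, ${}_2F_1(1/8,3/8;1;4x(1-x))={}_2F_1(1/4,3/4;1;x)$, then identifies $g$ with the signature-$4$ analogue of the complete elliptic integral and the point $z=1/9$ with the algebraic modulus $x=(\sqrt2-1)^2/6$. In this picture $g$, together with a companion quantity built from $g'$, plays the role of the period and second-kind quasiperiod of a CM elliptic object, both governed by a second-order Picard–Fuchs equation.

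The crux, and what I expect to be the main obstacle, is the evaluation at this singular value. I anticipate that $z=1/9$ corresponds to a CM point $\tau_0$ in the relevant upper half-plane, at which $g(1/9)$ is an algebraic multiple of a $\Gamma$-quotient divided by $\pi$, while the logarithmic derivative $g'(1/9)/g(1/9)$ carries the arithmetic of the second-kind period (Chowla–Selberg together with Ramanujan's theory of the nome). Substituting these into $g^2+16zgg'$, the transcendental $\Gamma$-factors should cancel against one another, and exactly one surviving power of $\pi$—reducing an a priori $1/\pi^2$ behaviour to $1/\pi$—would be supplied by the Legendre-type period–quasiperiod relation; pinning down the constant $2\sqrt3$ then becomes a finite algebraic verification. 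This last stage, namely identifying $\tau_0$, computing the singular-modulus data exactly, and confirming the cancellation to the precise value $2\sqrt3/\pi$, rests on nontrivial special-value computations in CM theory and is where the real work lies. A technically different route would be a Wilf–Zeilberger proof: guess a companion sum and a certificate that makes $(8k+1)A_k$ telescope, reducing \eqref{ram1} to a limit computable by Gauss's ${}_2F_1$-theorem. As the abstract already signals, however, for this particular entry such a certificate is hard to produce, which is precisely the gap the $q$-deformation approach of the paper is designed to fill.
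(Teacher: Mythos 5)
Your reductions are all correct: the identity $\binom{4k}{2k}\binom{2k}{k}^2=(4k)!/k!^4=256^k(\tfrac14)_k(\tfrac12)_k(\tfrac34)_k/k!^3$, the reformulation of the left-hand side of \eqref{ram1} as $\bigl(F(z)+8zF'(z)\bigr)|_{z=1/9}$ with $F={}_3F_2(\tfrac14,\tfrac12,\tfrac34;1,1;z)$, Clausen's formula with $(a,b)=(\tfrac18,\tfrac38)$, and the quadratic transformation sending $z=4x(1-x)=\tfrac19$ to $x=(3-2\sqrt2)/6$. This is the classical route to Ramanujan's series (essentially the Borwein and Berndt--Chan--Liaw signature-$4$ machinery), and it is genuinely different from the paper's: there Theorem~\ref{th1} is obtained as the $q\to1$ limit of the $q$-identity \eqref{q4}, which is itself a two-line specialisation of Rahman's cubic transformation from \cite{GR04}, so no elliptic-integral or CM input is needed at all. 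What your approach buys is a direct link to periods, singular moduli and a conceptual source for the constant $2\sqrt3$; what the paper's approach buys is that the very same $q$-identity, analysed at roots of unity instead of at $q\to1$, simultaneously yields the supercongruence of Theorem~\ref{th2}, which the classical route cannot see.

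That said, as written your argument is a programme rather than a proof. Everything after ``the crux'' is conditional: you do not identify the CM point $\tau_0$, you do not compute the singular values of $g$ and of the quasi-period companion of $g'$ there, and you do not verify the Legendre-relation cancellation that is supposed to leave a single power of $\pi$ together with the algebraic factor $2\sqrt3$. Since the theorem \emph{is} precisely this numerical evaluation, the step you defer is the entire content of the statement; until it is carried out (or replaced by a citation to the signature-$4$ evaluation at the relevant singular value), the proof is incomplete. The Wilf--Zeilberger alternative you mention is likewise only named, not executed.
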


The series on the left-hand side in \eqref{ram1} is an instance of hypergeometric series, a series of the form $\sum_{k\in\mathbb Z}R(k)$ for which $R(x+1)/R(x)$ is a rational function of variable~$x$.
A canonical \emph{hypergeometric} form of writing the series uses Pochhammer's symbol (also known as shifted factorial)
$$
(a)_k=\frac{\Gamma(a+k)}{\Gamma(a)}=\prod_{j=0}^{k-1}(a+j)
\quad\text{for}\; k=0,1,2,\dotsc.
$$
Then identity \eqref{ram1} can be equivalently stated as
\begin{equation}
\sum_{k=0}^\infty\frac{(\frac14)_k(\frac12)_k(\frac34)_k}{k!^3\,9^k}\,(8k+1)
=\frac{2\sqrt{3}}{\pi}.
\label{ram1-a}
\end{equation}
The corresponding function
$$
R(x)=\frac{\Gamma(\frac14+x)\Gamma(\frac12+x)\Gamma(\frac34+x)}{\Gamma(\frac14)\Gamma(\frac12)\Gamma(\frac34)\Gamma(1+x)^3}\,\frac{8x+1}{9^x}
$$
vanishes at $x=-1,-2,\dots$, so that the sum over $\mathbb Z$ reduces to the one over nonnegative integers $k$.

One of remarkable arithmetic features of \emph{all} Ramanujan's formulae for $1/\pi$ is presence of companion supercongruences: truncation of the corresponding infinite sum at the $(p-1)$-th place happens to be a simple expression modulo $p^3$ for all but finitely many primes $p$.
This was observed by this author \cite{Zu09} following earlier hints of L.~Van Hamme \cite{VH97}.
In our example \eqref{ram1}--\eqref{ram1-a}, the result reads as follows.

\begin{theorem}
\label{th2}
We have
\begin{equation}
\sum_{k=0}^{p-1}\frac{(\frac14)_k(\frac12)_k(\frac34)_k}{k!^3\,9^k}\,(8k+1)
\equiv p\biggl(\frac{-3}p\biggr)\pmod{p^3}
\quad\text{for $p>3$ prime},
\label{ram1a}
\end{equation}
where the Jacobi--Kronecker symbol $\bigl(\frac{-3}{\cdot}\bigr)$ `replaces' the square root of $3$.
\end{theorem}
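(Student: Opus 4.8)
The plan is to prove \eqref{ram1a} by the creative microscoping method: I embed the truncated sum into a one-parameter family of $q$-series and prove a single $q$-congruence for the family, from which \eqref{ram1a} falls out after two successive specialisations. First I would introduce a free parameter $a$ and a $q$-analogue
\[
S_n(a,q)=\sum_{k=0}^{n-1}[8k+1]_q\,c_k(a,q),
\]
where $[m]_q=(1-q^m)/(1-q)$ and $c_k(a,q)$ is a $q$-deformation of the summand $(\tfrac14)_k(\tfrac12)_k(\tfrac34)_k/(k!^3\,9^k)$ built from $q$-Pochhammer symbols in the bases $q^2$ and $q^4$, with the parameter $a$ inserted by replacing one symmetric pair of factors by $(aq;\cdot)_k(q/a;\cdot)_k$. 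The deformation is arranged so that $S_n(1,q)$ reduces, as $q\to1$, to the left-hand side of \eqref{ram1a} with $n=p$, while the presence of $a$ provides two extra ``tuneable'' zeros that will be responsible for the third power of $p$.

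The heart of the argument is a parametric $q$-congruence of the shape
\[
S_n(a,q)\equiv C_n(a,q)\pmod{\Phi_n(q)\,(1-aq^n)(a-q^n)},
\]
where $\Phi_n(q)$ is the $n$-th cyclotomic polynomial and $C_n(a,q)$ is an explicit closed form with $C_p(1,1)=p\bigl(\tfrac{-3}p\bigr)$; the Jacobi--Kronecker symbol emerges here from evaluating $C_n$ at a root of unity, the base $9=3^2$ producing the quadratic character modulo $3$. I would establish this in two independent steps and then glue them. Step (a): the divisibility by $(1-aq^n)(a-q^n)$ follows from the two special evaluations $S_n(q^{\pm n},q)=C_n(q^{\pm n},q)$, each of which turns the sum into a terminating very-well-poised $q$-hypergeometric series summable in closed form by a $q$-analogue of Dixon's (or Watson's) theorem. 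Step (b): the companion congruence $S_n(a,q)\equiv C_n(a,q)\pmod{\Phi_n(q)}$, where modulo $\Phi_n(q)$ one has $q^n\equiv1$ and the summand enjoys a reflected-index symmetry $k\mapsto n-1-k$ that collapses the sum. Since $\Phi_n(q)$ and $(1-aq^n)(a-q^n)$ are coprime in $\mathbb Z[a,q]$, the two divisibilities combine to the displayed congruence.

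Finally I would specialise. Setting $a=1$ makes the modulus divisible by $\Phi_n(q)(1-q^n)^2$; for $n=p$ prime one has $1-q^p=(1-q)\Phi_p(q)$, so $\Phi_p(q)^3$ divides $S_p(1,q)-C_p(1,q)$ in $\mathbb Z_{(p)}[q]$ (the $q$-factorials in the denominators are coprime to $\Phi_p(q)$ for $k<p$, so no cancellation spoils the valuation). Letting $q\to1$ and using $\Phi_p(1)=p$ then upgrades this to $S_p(1,1)\equiv C_p(1,1)=p\bigl(\tfrac{-3}p\bigr)\pmod{p^3}$, which is \eqref{ram1a}. I expect the main obstacle to be Step (a): correctly guessing the closed form $C_n(a,q)$ and matching $S_n(q^{\pm n},q)$ to a known $q$-summation, since the linear factor $8k+1$ forces a very-well-poised balance that only special $q$-hypergeometric identities can evaluate. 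The subsequent $p$-adic passage to the limit, an instance of the asymptotic analysis at roots of unity underlying the method, is then routine.
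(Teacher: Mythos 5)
Your overall architecture is exactly the paper's: deform the sum with an extra parameter $a$, prove a congruence modulo $\Phi_n(q)\,(1-aq^n)(a-q^n)$ by treating the two factors separately and gluing by coprimality, then let $a\to1$ to get modulus $\Phi_n(q)^3$ and finally $q\to1$ with $n=p$. (One detail: the correct deformation lives in the bases $q^2$ and $q^6$, not $q^2$ and $q^4$ --- the factor $9^k=3^{2k}$ arises from two quotients $(q^2;q^2)_k/(q^6;q^6)_k\to 1/3^k$; a base-$q^4$ symbol would produce powers of $2$.) However, both of your key steps have genuine gaps where they diverge from what actually makes the argument work.

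In Step (a) you hope that $S_n(q^{\pm n},q)$ is summable by a $q$-analogue of Dixon's or Watson's theorem. It is not: the series mixes the bases $q^2$ and $q^6$ and is of \emph{cubic} type, so no very-well-poised summation in a single base applies. The paper obtains the evaluation at $a=q^{\pm n}$ by specialising a \emph{nonterminating} identity (Theorem~\ref{th6}), which is itself the $d\to0$ limit of Rahman's cubic transformation \cite[Eq.~(3.8.18)]{GR04}; the series then terminates at $k=(n-1)/2$ and the infinite products on the right collapse to $q^{-(n-1)/2}[n]\bigl(\frac{-3}{n}\bigr)$ (Lemma~\ref{lem-new}). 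You correctly flag this as the main obstacle, but the identity you would need is not in the standard Dixon/Watson family, so as written this step does not go through.

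In Step (b) you assert that modulo $\Phi_n(q)$ the summand "enjoys a reflected-index symmetry $k\mapsto n-1-k$ that collapses the sum." This is unsubstantiated and is precisely the point where the example resisted the classical techniques: with the $a$-dependent factors $(aq;q^2)_k(q/a;q^2)_k$ in base $q^2$ against $(aq^6;q^6)_k(q^6/a;q^6)_k$ in base $q^6$, no such pairing is evident, and the paper does something entirely different --- the actual "microscoping." It takes the radial limit $q\to\zeta$ of the nonterminating identity at every primitive $d$-th root of unity with $d\mid n$, $d>1$; the left-hand side factors as $\bigl(\sum_{\ell\ge0}2^{-4\ell}\binom{4\ell}{2\ell}\bigr)\cdot\sum_{k=0}^{d-1}c_\zeta(k)$, which diverges unless $\sum_{k=0}^{d-1}c_\zeta(k)=0$, while the right-hand side stays bounded; hence the partial sums vanish at all these roots of unity and $\Phi_d(q)$ divides the truncated sum for every $d\mid n$, $d>1$ (giving in fact the stronger modulus $[n]$). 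Without either this asymptotic argument or a verified symmetry, your congruence modulo $\Phi_n(q)$ is unproven, and the final modulus $p^3$ degrades to $p^2$.
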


Observe that the only primes that show up in the denominator of the left-hand side
$$
\sum_{k=0}^{p-1}\frac{(\frac14)_k(\frac12)_k(\frac34)_k}{k!^3\,9^k}\,(8k+1)
=\sum_{k=0}^{p-1}\frac{\binom{4k}{2k}{\binom{2k}{k}}^2}{2^{8k}3^{2k}}\,(8k+1)
$$
are 2 and 3, so that the congruence is well understood as the identity in $\mathbb Z/p^3\mathbb Z$ for any prime $p>3$.

The last two decades witnessed development of methods to prove supercongruences like \eqref{ram1a}, sometimes modulo a smaller power of $p$ and usually on a case-by-case study.
Apart from involvement of classical (and semi-classical) congruences, those methods require somewhat tricky applications of numerous hypergeometric identities or make use of the algorithm of creative telescoping, for instance, of suitable Wilf--Zeilberger pairs.
These strategies did not allow one to establish Theorem~\ref{th2} but they hinted at existence of a $q$-counterpart of the arithmetic phenomena behind Ramanujan's formulae and alike.
In the remaining part of this review we outline how $q$-analogues of Theorems~\ref{th1} and \ref{th2} look like, how we prove them in the joint work \cite{GZ19} with V.~Guo and how they are used to establish the theorems.

In order to state those $q$-analogues we need to familiarise ourselves with standard $q$-hypergeometric notation.
We deal with $q$ inside the unit disc, $|q|<1$, and define
$$
(a;q)_\infty=\prod_{j=0}^\infty(1-aq^j)
$$
and the related $q$-Pochhammer symbol by
$$
(a;q)_k=\frac{(a;q)_\infty}{(aq^k;q)_\infty}=\prod_{j=0}^{k-1}(1-aq^j)
\quad\text{for}\; k=0,1,2,\dots,
$$
for non-negative integers $n$, so that
$$
\lim_{q\to1}\frac{(q^a;q)_n}{(1-q)^n}=(a)_n
\quad\text{and}\quad
\lim_{q\to1}\frac{(q;q)_\infty(1-q)^{1-a}}{(q^a;q)_\infty}=\Gamma(a).
$$
Furthermore, we introduce the $q$-numbers and $q$-binomial coefficients as
$$
[n]=[n]_q=\frac{1-q^n}{1-q}
\quad\text{and}\quad
\qbin nm={\qbin nm}_q=\frac{(q;q)_n}{(q;q)_m(q;q)_{n-m}},
$$
so that they correspond to $n$ and $\binom nm$, respectively, in the limit as $q\to1$.
This notation is used to record basic (or $q$-) hypergeometric series, which are of the form $\sum_{k\in\mathbb Z}R(k)$ with $R(x+1)/R(x)$ being a rational function of~$q^x$.

\begin{theorem}[$q$-analogue of Theorem~\ref{th1}]
\label{th3}
The following equality is true\textup:
\begin{equation}
\sum_{k=0}^{\infty}\frac{(q;q^2)_k^2 (q;q^2)_{2k}}{(q^2;q^2)_{2k}(q^6;q^6)_k^2}[8k+1]q^{2k^2}
=\frac{(q^3;q^2)_\infty (q^3;q^6)_\infty }{(q^2;q^2)_\infty (q^6;q^6)_\infty}.
\label{q4}
\end{equation}
\end{theorem}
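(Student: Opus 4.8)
The plan is to prove \eqref{q4} not in isolation but as the $a\to1$ limit of a one-parameter refinement, the refinement itself being the $n\to\infty$ limit of a \emph{terminating} identity; this two-step limiting is what renders the closed form accessible and is the hallmark of the creative-microscoping set-up. First I would deform the summand in a very-well-poised fashion by introducing a parameter $a$: replace the factor $(q;q^2)_k^2$ in the numerator with $(aq;q^2)_k\,(q/a;q^2)_k$ and the factor $(q^6;q^6)_k^2$ in the denominator with $(aq^6;q^6)_k\,(q^6/a;q^6)_k$, producing a new summand $t_k(a)$ that is symmetric under $a\leftrightarrow1/a$ and satisfies $t_k(1)=$ the summand of \eqref{q4}. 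The payoff of the deformation is that, for generic $a$, the resulting series is genuinely well poised, so that its partial sums carry enough structure to be summed in closed form, whereas the undeformed series at $a=1$ is too rigid for a direct attack.

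Next I would establish the terminating identity
$$
\sum_{k=0}^{n} t_k(a) = R_n(a,q),
$$
with $R_n$ an explicit finite product, by the $q$-analogue of the Wilf--Zeilberger method: I would search for a companion $G(n,k)$ realising a creative-telescoping relation $t_k(a)\big|_{n+1}-t_k(a)\big|_n=G(n,k+1)-G(n,k)$, sum over $k$, and check the boundary terms, so that the claim reduces to a single-variable recurrence in $n$ together with one initial value. Equivalently, one may recognise the truncated $a$-series as a very-well-poised ${}_6\phi_5$-type sum — the linear factor $[8k+1]$ together with the quadratic power $q^{2k^2}$ are precisely what a classical very-well-poised series produces after one of its parameters is sent to infinity — and read off $R_n$ from the corresponding summation theorem, or else verify the rational-function identity in $a$ by checking it at the specialisations $a=q^{1+2n}$ that make the numerator symbol $(q/a;q^2)_k$ vanish for $k>n$. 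Once the finite identity is in hand the remaining analysis is soft: since $|q|<1$ the series converges absolutely, so letting $n\to\infty$ turns the finite product $R_n(a,q)$ into an infinite product $R_\infty(a,q)$, and finally setting $a=1$ collapses the $a$-dependent factors to yield exactly the right-hand side of \eqref{q4}.

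I expect the genuine obstacle to be the construction and verification of the terminating identity, for two intertwined reasons. The summand mixes the three bases $q$, $q^2$ and $q^6$, so it does not fall under a single classical summation formula as it stands; reconciling the $q^6$-Pochhammer symbols with the $q^2$- and $q$-symbols, while accounting for the quadratic exponent $q^{2k^2}$, will require either a $q$-quadratic transformation or a splitting of the sum into residue classes before a known summation applies. The second subtlety is choosing the $a$-deformation so that the terminating (or polynomiality-in-$a$) verification is legitimate \emph{and} the specialisation $t_k(1)$ still reproduces \eqref{q4} faithfully; getting both the vanishing mechanism and the $a\to1$ degeneration right is where the bookkeeping is most error-prone, and where an explicit WZ certificate, once located, gives the cleanest rigorous confirmation.
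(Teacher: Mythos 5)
Your overall architecture coincides with the paper's: the $a$-deformation you propose (replacing $(q;q^2)_k^2$ by $(aq;q^2)_k(q/a;q^2)_k$ and $(q^6;q^6)_k^2$ by $(aq^6;q^6)_k(q^6/a;q^6)_k$) is exactly the left-hand side of the paper's Theorem~\ref{th6}, and your final step ``set $a=1$'' is literally the paper's entire proof of Theorem~\ref{th3}. The genuine gap is that the heart of the matter --- actually summing the deformed series in closed form --- is left unresolved, and the tools you nominate are not the right ones. The identity is \emph{not} a very-well-poised ${}_6\phi_5$ in disguise, and no quadratic transformation will reconcile the bases: the coexistence of $q^2$ and $q^6$ is the signature of a \emph{cubic} transformation, and the paper obtains Theorem~\ref{th6} as the $d\to0$ limit of Rahman's cubic transformation \cite[Eq.~(3.8.18)]{GR04} after the substitutions $q\mapsto q^2$, $c=q/a$, $a\mapsto aq$. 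You correctly flag the mixing of bases as the obstacle, but you do not overcome it; without producing either that cubic summation or an explicit $q$-WZ certificate, the argument does not close.

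There is also a structural slip in your intermediate step. As written, ``$\sum_{k=0}^{n}t_k(a)=R_n(a,q)$ with $R_n$ an explicit finite product,'' for a summand $t_k(a)$ that does not depend on $n$, would force $R_{n+1}-R_n=t_{n+1}(a)$, i.e.\ the summand would have to telescope (be Gosper-summable), which fails here: the partial sums of the deformed series admit no closed form for generic $a$. What does terminate is the specialisation $a=q^{\pm n}$ with $n$ odd, where $(q/a;q^2)_k$ vanishes for $k>(n-1)/2$; this is the paper's Lemma~\ref{lem-new}, and there it is deduced \emph{from} the nonterminating identity, not the other way round. If you wish to reverse the logic --- prove the terminating cases first and then recover generic $a$ --- you must (i) prove each terminating identity independently, e.g.\ by a $q$-WZ certificate you have not exhibited, and (ii) justify the passage back to generic $a$: the sequence $a=q^n$ accumulates only at $a=0$, where the factors $(q/a;q^2)_k$ and $(q^6/a;q^6)_k$ blow up, so the identity theorem for analytic functions does not apply naively and some normalisation in $a$ is needed. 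Both points are fixable in principle, but they are precisely where the real work lies, and the proposal does not carry it out.
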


\begin{theorem}[$q$-analogue of Theorem~\ref{th2}]
\label{th4}
Let $n$ be a positive integer coprime with $6$. Then
\begin{equation}
\sum_{k=0}^{n-1}\frac{(q;q^2)_k^2 (q;q^2)_{2k}}{(q^2;q^2)_{2k}(q^6;q^6)_k^2}[8k+1]q^{2k^2}
\equiv q^{-(n-1)/2}[n]\biggl(\frac{-3}{n}\biggr) \pmod{[n]\Phi_n(q)^2}.
\label{q4a}
\end{equation}
\end{theorem}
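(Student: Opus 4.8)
The plan is to prove \eqref{q4a} by the creative microscoping technique: introduce a free parameter $a$ that deforms the sum, prove a parametric identity valid modulo a polynomial in $a$ that collapses onto the cyclotomic factors we want, and finally let $a\to1$. A natural deformation, symmetric under $a\mapsto a^{-1}$ and reducing to the summand of \eqref{q4a} at $a=1$, is
\[
S_n(a)=\sum_{k=0}^{n-1}\frac{(aq;q^2)_k(q/a;q^2)_k(q;q^2)_{2k}}{(q^2;q^2)_{2k}(aq^6;q^6)_k(q^6/a;q^6)_k}\,[8k+1]q^{2k^2},
\]
obtained by splitting each squared factor $(q;q^2)_k^2$ and $(q^6;q^6)_k^2$ into its two conjugate pieces. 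The parameter $a$ is the microscope: keeping it generic regularises the sum near each primitive $n$-th root of unity, where the undeformed terms develop the coincidences responsible for the supercongruence.

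First I would establish an exact parametric evaluation $S_n(a)\equiv G(a,q)\pmod{(1-aq^n)(a-q^n)}$ for an explicit product $G(a,q)$ of $q$-Pochhammer symbols. As the modulus is quadratic in $a$ with the two simple roots $a=q^{n}$ and $a=q^{-n}$, it suffices to verify $S_n(a)=G(a,q)$ at these specialisations; the $a\mapsto a^{-1}$ symmetry of the summand makes the two checks equivalent, so I treat $a=q^{-n}$. There $(aq;q^2)_k=(q^{1-n};q^2)_k$ vanishes for $k\ge(n+1)/2$ --- note $n$ is odd since $\gcd(n,6)=1$ --- so the series terminates at $k=(n-1)/2$ and becomes a terminating, very-well-poised basic hypergeometric sum carrying $[8k+1]$ as its well-poised factor. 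I expect it to be summable by a Jackson/$q$-Dougall-type ${}_8\phi_7$ evaluation (after a suitable base change), which confirms the identity and determines $G$; a direct computation then gives $G(1,q)=q^{-(n-1)/2}[n]\bigl(\frac{-3}{n}\bigr)$, the Jacobi--Kronecker symbol arising from the residue of $n$ modulo $3$.

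Setting $a=1$ in the parametric identity gives $S_n(1)-G(1,q)\equiv0\pmod{(1-q^n)^2}$, and since $(1-q^n)^2=\prod_{d\mid n}\Phi_d(q)^2$ this already yields the congruence modulo $\Phi_n(q)^2$ and modulo $\prod_{1<d<n}\Phi_d(q)$. To reach the full modulus $[n]\Phi_n(q)^2=\Phi_n(q)^3\prod_{1<d\mid n,\,d<n}\Phi_d(q)$ I still need one extra power of $\Phi_n(q)$. For this I would prove the companion congruence $S_n(a)\equiv G(a,q)\pmod{\Phi_n(q)}$ as an identity in $a$ --- that is, the deformed sum evaluated at a primitive $n$-th root of unity coincides with $G$ for every $a$ --- which is exactly the asymptotic analysis of the deformation at roots of unity advertised in the abstract. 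Because $\Phi_n(q)$ is coprime to $(1-aq^n)(a-q^n)$ in $\mathbb Z[q][a]$, the two congruences combine to $S_n(a)\equiv G(a,q)\pmod{\Phi_n(q)(1-aq^n)(a-q^n)}$; putting $a=1$ then gives divisibility by $\Phi_n(q)(1-q^n)^2$, a multiple of $[n]\Phi_n(q)^2$, and with the value $G(1,q)=q^{-(n-1)/2}[n]\bigl(\frac{-3}{n}\bigr)$ this is precisely \eqref{q4a}.

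The hard part is concentrated in the two evaluations. Establishing the parametric identity of the second paragraph hinges on locating and proving the right terminating very-well-poised summation --- the one genuinely creative calculation --- and the unusual mixture of the bases $q^2$ and $q^6$, together with the deformation of the base-$q^6$ factors, may force a quadratic or cubic transformation before a standard summation applies. Even more delicate is the companion identity modulo $\Phi_n(q)$: it is what upgrades the square $\Phi_n(q)^2$ to the cube $\Phi_n(q)^3$, i.e.\ it is the actual ``super'' in the supercongruence, and it requires careful bookkeeping of which undeformed factors acquire zeros at $q=\zeta$ and how these conspire to make the truncated deformed sum telescope to $G(a,\zeta)$. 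I would attack it by tracking the $\Phi_n$-adic leading behaviour of $S_n(a)$ against that of $G$, and I anticipate that verifying the vanishing of the error term to first order in $\Phi_n(q)$ is where the bulk of the work lies.
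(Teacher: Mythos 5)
Your plan is essentially the paper's own argument: the deformation $S_n(a)$ is exactly the sum of Theorem~\ref{th5}, the evaluation at $a=q^{\pm n}$ is Lemma~\ref{lem-new} (obtained there from Rahman's cubic transformation, confirming your suspicion that a cubic transformation rather than a bare $q$-Dougall summation is what is needed, with the value a constant $q^{-(n-1)/2}[n]\bigl(\frac{-3}{n}\bigr)$ rather than a genuinely $a$-dependent product), the companion congruence at roots of unity supplies the third power of $\Phi_n(q)$, and the passage $a\to1$ --- including the check that the denominator $(aq^6;q^6)_{n-1}(q^6/a;q^6)_{n-1}$ stays coprime to $\Phi_n(q)$ because $\gcd(n,6)=1$, which your sketch should make explicit --- is precisely the paper's proof of Theorem~\ref{th4}. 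The one place your outline misjudges the mechanism is the mod-$\Phi_n(q)$ step: it is not a first-order $\Phi_n$-adic expansion of the truncated sum but an asymptotic comparison of the two sides of the nonterminating identity \eqref{q4t} as $q\to\zeta$ radially, where the divergence of $\sum_\ell\binom{4\ell}{2\ell}2^{-4\ell}$ against the boundedness of the right-hand side forces each period sum $\sum_{k=0}^{d-1}c_\zeta(k)$ to vanish exactly.
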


In the last theorem, the truncated $q$-hypergeometric sums are considered modulo (products of) cyclotomic polynomials
$$
\Phi_n(q)=\prod_{\substack{j=1\\ \gcd(j,n)=1}}^n(q-e^{2\pi ij/n})\in\mathbb Z[q].
$$
Notice that $[n]_q=\prod_{d\mid n,\,d>1}\Phi_d(q)$ and that $[p]_q=\Phi_p(q)\to p$ as $q\to1$ when $p$~is prime.
We use the convention to interpret the congruence $A_1(q)/A_2(q)\equiv0\pmod{P(q)}$ for polynomials $A_1(q),A_2(q),P(q)\in\mathbb Z[q]$ as follows: $P(q)$ divides $A_1(q)$ and is coprime with $A_2(q)$;
more generally, $A(q)\equiv B(q)\pmod{P(q)}$ for rational functions $A(q),B(q)\in\mathbb Z(q)$ means $A(q)-B(q)\equiv0\pmod{P(q)}$.

\begin{proof}[Proof of Theorems~\textup{\ref{th1}} and~\textup{\ref{th2}}]
In the case of formula~\eqref{q4}, we see that
\begin{gather*}
\lim_{q\to1}\frac{(q;q^2)_{2k}}{(q^2;q^2)_{2k}}
=\frac{(\frac12)_{2k}}{(2k)!}=\frac{(\frac14)_k(\frac34)_k}{(\frac12)_k\,k!},
\\
\lim_{q\to1}\frac{(q;q^2)_k}{(q^6;q^6)_k}
=\lim_{q\to1}\frac{(q;q^2)_k}{(q^2;q^2)_k\,\prod_{j=1}^k(1+q^{2j}+q^{4j})}
=\frac{(\frac12)_k}{k!\,3^k}
\\ \intertext{and}
\lim_{q\to1}\frac{(q^3;q^6)_\infty}{(q^6;q^6)_\infty(1-q^6)^{1/2}}
=\lim_{q\to1}\frac{(q;q^2)_\infty}{(q^2;q^2)_\infty(1-q^2)^{1/2}}
=\frac{1}{\Gamma(\tfrac12)}=\frac{1}{\sqrt\pi},
\end{gather*}
hence in the limit as $q\to1$ we obtain \eqref{ram1a}.
At the same time, taking the limit as $q\to1$ in \eqref{q4a} for $n=p$ prime leads to the Ramanujan-type supercongruences \eqref{ram1a}.
\end{proof}

Our proof of Theorem~\ref{th4} combines two principles. One corresponds to achieving the congruences in \eqref{q4a} modulo $[n]$ only, and for this we deal with the $q$-hypergeometric sum \eqref{q4} at a `$q$-microscopic' level\,---\,that is, at roots of unity (and this cannot be transformed into a derivation of \eqref{ram1a} directly from~\eqref{ram1}).
Another `creative' principle is about getting more parameters involved in the $q$-story.

\begin{theorem}
\label{th5}
Let $n$ be a positive integer coprime with $6$.
Then, for any indeterminates $a$ and $q$, we have modulo $[n](1-aq^n)(a-q^n)$,
\begin{equation}
\sum_{k=0}^{n-1}\frac{(aq;q^2)_k (q/a;q^2)_k (q;q^2)_{2k}}{(q^2;q^2)_{2k}(aq^6;q^6)_k (q^6/a;q^6)_k }[8k+1]q^{2k^2}
\equiv q^{-(n-1)/2}[n]\biggl(\frac{-3}{n}\biggr).
\label{q4a-new}
\end{equation}
\end{theorem}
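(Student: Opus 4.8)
The plan is to exploit the factorisation of the modulus $[n](1-aq^n)(a-q^n)$ into pairwise coprime pieces and to prove \eqref{q4a-new} modulo each piece separately. Viewed in $\mathbb Z[a,a^{-1}][q]$, the three polynomials $[n]=\prod_{d\mid n,\,d>1}\Phi_d(q)$, $\,1-aq^n$ and $\,a-q^n$ are pairwise coprime: the cyclotomic factors involve only~$q$ and their roots are roots of unity, whereas $1-aq^n$ and $a-q^n$ genuinely involve~$a$ and are non-associate linear polynomials in it. Hence, by the Chinese remainder theorem for polynomials, it suffices to establish \eqref{q4a-new} modulo $1-aq^n$, modulo $a-q^n$, and modulo $\Phi_d(q)$ for every divisor $d>1$ of~$n$. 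I would also note that the summand is invariant under $a\mapsto1/a$, which merely swaps $(aq;q^2)_k\leftrightarrow(q/a;q^2)_k$ and $(aq^6;q^6)_k\leftrightarrow(q^6/a;q^6)_k$, while this same substitution interchanges the ideals $(1-aq^n)$ and $(a-q^n)$ and fixes both $[n]$ and the right-hand side. Therefore the congruence modulo $a-q^n$ follows formally from the one modulo $1-aq^n$, and only the latter needs direct treatment.

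For the parametric factor I would specialise $a=q^{-n}$. Since $n$ is odd, the factor $(aq;q^2)_k=(q^{1-n};q^2)_k$ acquires a vanishing term $1-q^{1-n+2j}$ precisely at $j=(n-1)/2$, so every summand with $k>(n-1)/2$ dies and the left-hand side collapses to the terminating sum $\sum_{k=0}^{(n-1)/2}$; one checks that none of the denominator factors $(q^{6-n};q^6)_k$, $(q^{6+n};q^6)_k$, $(q^2;q^2)_{2k}$ vanishes in this range because $n$ is coprime to~$6$. The resulting finite sum is very-well-poised, and the heart of the matter is to evaluate it in closed form and to match the answer with $q^{-(n-1)/2}[n]\bigl(\frac{-3}{n}\bigr)$. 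I expect this to require a terminating classical $q$-summation of Dixon/Watson type, possibly after a preparatory transformation that accommodates the mixed bases $q^2$ and $q^6$ and the quadratic factor $q^{2k^2}$; this identification, and the verification that the evaluation reproduces the Jacobi--Kronecker symbol $\bigl(\frac{-3}{n}\bigr)$ through the dependence of a sign on $n\bmod 3$, is the step I anticipate to be the main obstacle.

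For the cyclotomic factors I would fix $d\mid n$ with $d>1$ and work modulo $\Phi_d(q)$, where the right-hand side vanishes because $\Phi_d(q)\mid[n]$. It then remains to show that the truncated sum itself is $\equiv0\pmod{\Phi_d(q)}$ for generic~$a$. At a primitive $d$-th root of unity the $a$-dependent factors do not vanish, so the cancellation must come from the $a$-free ratio $(q;q^2)_{2k}/(q^2;q^2)_{2k}$ together with $[8k+1]q^{2k^2}$. I would analyse the orders of vanishing of numerator and denominator at $q=\zeta_d$: for an initial range of~$k$ the numerator $(q;q^2)_{2k}$ already vanishes while the denominator does not, killing those terms, and for the remaining terms I would show that each consecutive block of $d$ terms sums to zero modulo $\Phi_d(q)$, for instance via a sign-reversing reflection of the summand within the block; summing over the $n/d$ blocks that tile $0\le k\le n-1$ then gives the claim. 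The bookkeeping of zeros of the $q$-Pochhammer symbols at roots of unity and the precise reflection require care, but are routine compared with the summation above.

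Assembling the three congruences through the Chinese remainder theorem yields \eqref{q4a-new}; specialising $a=1$, so that $(1-aq^n)(a-q^n)$ becomes $(1-q^n)^2$, a multiple of $\Phi_n(q)^2$, then recovers the supercongruence \eqref{q4a} of Theorem~\ref{th4}.
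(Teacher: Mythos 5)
Your skeleton---splitting the modulus $[n](1-aq^n)(a-q^n)$ into pairwise coprime factors, handling $1-aq^n$ and $a-q^n$ by specialising $a=q^{\mp n}$, and handling each $\Phi_d(q)\mid[n]$ separately---is exactly the paper's decomposition, and the $a\mapsto1/a$ symmetry observation is a nice economy. But both substantive steps are left unproven, and that is where the content of the theorem lives.

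First, the evaluation of the terminating sum at $a=q^{-n}$ (equivalently $a=q^n$), which you correctly identify as ``the main obstacle,'' is not something you resolve: you only conjecture that some Dixon/Watson-type terminating summation will do it. The paper gets this evaluation (Lemma~\ref{lem-new}) for free by substituting $a=q^n$ into the \emph{nonterminating} identity \eqref{q4t} of Theorem~\ref{th6}, itself the $d\to0$ limit of Rahman's cubic transformation; the series then terminates at $k=(n-1)/2$ automatically and the infinite products on the right collapse to $\pm q^{-(n-1)/2}[n]$ according to $n\bmod 3$, producing the Jacobi--Kronecker symbol. Without this (or an equivalent closed-form evaluation) the congruences modulo $1-aq^n$ and $a-q^n$ are not established.

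Second, for the congruence modulo $\Phi_d(q)$ you assert that each block of $d$ consecutive terms sums to zero at a primitive $d$-th root of unity $\zeta$, ``for instance via a sign-reversing reflection,'' and call this routine. It is not: the terms $c_\zeta(k)$ for $0\le k<d$ do not pair off by any evident involution, and no such reflection is exhibited. The paper proves the block-sum vanishing $\sum_{k=0}^{d-1}c_\zeta(k)=0$ by an entirely different, asymptotic mechanism---the ``creative microscoping'' of the title. One groups the convergent series \eqref{q4t} into blocks of length $d$, notes that $c_q(\ell d+k)/c_q(\ell d)\to c_\zeta(k)$ and $c_q(\ell d)\to 2^{-4\ell}\binom{4\ell}{2\ell}$ as $q\to\zeta$ radially, and observes that $\sum_\ell 2^{-4\ell}\binom{4\ell}{2\ell}$ diverges while the right-hand side of \eqref{q4t} stays bounded as $q\to\zeta$; the only escape is that the common block factor $\sum_{k=0}^{d-1}c_\zeta(k)$ vanishes. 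This divergence-versus-boundedness comparison is the paper's key innovation and cannot be replaced by the unsubstantiated reflection you sketch. So the proposal has the right architecture but genuine gaps at both load-bearing points.
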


\begin{proof}[Proof of Theorem \textup{\ref{th4}}]
The denominator of \eqref{q4a-new} related to $a$ is the factor $(aq^6;q^6)_{n-1}\*(q^6/a;q^6)_{n-1}$;
its limit as $a\to1$ is relatively prime to $\Phi_n(q)$, since $n$ is coprime with~$6$.
On the other hand, the limit of $(1-aq^n)(a-q^n)$ as $a\to1$ has the factor $\Phi_n(q)^2$.
Thus, letting $a\to1$ in \eqref{q4a-new} we see that \eqref{q4a} is true modulo $\Phi_n(q)^3$.
At the same time, by considering \eqref{q4a-new} modulo $[n]$ only and specialising $a=1$ in the result reads
\begin{equation*}
\sum_{k=0}^{n-1}\frac{(q;q^2)_k^2 (q;q^2)_{2k}}{(q^2;q^2)_{2k}(q^6;q^6)_k^2}[8k+1]q^{2k^2}
\equiv0\pmod{[n]}.
\end{equation*}
Thus, indeed both sides of \eqref{q4a} are congruent modulo $[n]\Phi_n(q)^2$.
\end{proof}

In turn, the general set of congruences in Theorem~\ref{th5} is deduced from a nonterminating version of \eqref{q4a-new}.

\begin{theorem}
\label{th6}
The following identity is true\textup:
\begin{equation}
\sum_{k=0}^\infty\frac{(aq;q^2)_k (q/a;q^2)_k (q;q^2)_{2k}}{(q^2;q^2)_{2k}(aq^6;q^6)_k (q^6/a;q^6)_k }[8k+1]q^{2k^2}
=\frac{(q^5;q^6)_\infty(q^7;q^6)_\infty(aq^3;q^6)_\infty(q^3/a;q^6)_\infty}
{(q^2;q^6)_\infty(q^4;q^6)_\infty(aq^6;q^6)_\infty(q^6/a;q^6)_\infty}.
\label{q4t}
\end{equation}
\end{theorem}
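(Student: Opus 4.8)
The plan is to recognise the right-hand side of \eqref{q4t} as a special value of the classical nonterminating very-well-poised ${}_6\phi_5$ summation taken in base $q^6$, and then to reduce the left-hand side to exactly that series. Writing the summation in the combined form
\begin{multline*}
\sum_{k=0}^\infty\frac{1-Aq^{12k}}{1-A}\,\frac{(A;q^6)_k(B;q^6)_k(C;q^6)_k(D;q^6)_k}{(q^6;q^6)_k(Aq^6/B;q^6)_k(Aq^6/C;q^6)_k(Aq^6/D;q^6)_k}\Bigl(\frac{Aq^6}{BCD}\Bigr)^k \\
=\frac{(Aq^6;q^6)_\infty(Aq^6/BC;q^6)_\infty(Aq^6/BD;q^6)_\infty(Aq^6/CD;q^6)_\infty}{(Aq^6/B;q^6)_\infty(Aq^6/C;q^6)_\infty(Aq^6/D;q^6)_\infty(Aq^6/BCD;q^6)_\infty},
\end{multline*}
I would specialise $A=q$, $B=q^3$, $C=aq$, $D=q/a$. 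Then the very-well-poised factor becomes $(1-q^{12k+1})/(1-q)=[12k+1]$, the argument becomes $(Aq^6/BCD)^k=q^{2k}$, and a direct check shows that the four numerator products collapse to $(q^5;q^6)_\infty(q^7;q^6)_\infty(aq^3;q^6)_\infty(q^3/a;q^6)_\infty$ and the four denominator products to $(q^2;q^6)_\infty(q^4;q^6)_\infty(aq^6;q^6)_\infty(q^6/a;q^6)_\infty$ --- precisely the right-hand side of \eqref{q4t}. (This is valid for $|q|<1$, since the summation requires $|Aq^6/BCD|=|q^2|<1$.) Thus the whole problem is reduced to showing that the series on the left of \eqref{q4t} equals this ${}_6\phi_5$.

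Comparing summands, the target ${}_6\phi_5$ term is
$$
\frac{(q;q^6)_k(q^3;q^6)_k(aq;q^6)_k(q/a;q^6)_k}{(q^6;q^6)_k(q^4;q^6)_k(aq^6;q^6)_k(q^6/a;q^6)_k}\,[12k+1]\,q^{2k},
$$
whereas the term of \eqref{q4t} carries $[8k+1]\,q^{2k^2}$ together with the quadratic/mixed-base pieces $(q;q^2)_{2k}/(q^2;q^2)_{2k}$ and the base-$q^2$ factors $(aq;q^2)_k(q/a;q^2)_k$. The two expressions are \emph{not} equal term by term, so a genuine transformation is needed; note, however, that the $a$-dependent denominators $(aq^6;q^6)_k(q^6/a;q^6)_k$ are common to both, so it suffices to match the remaining "core" factors after summation. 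The plan here is to produce the required transformation as a quadratic/cubic base change --- one that absorbs the factor $q^{2k^2}$, merges the bases $q^2$ and $q^6$ into a single base $q^6$, and converts the very-well-poised datum $[8k+1]$ (effectively base $q^4$) into $[12k+1]$ (base $q^6$). I would seek this either among the catalogue of quadratic and cubic transformations of Gasper and Rahman or construct it from a suitable Bailey pair. This step is exactly where the real difficulty lies: locating or deriving the base-changing transformation is the conceptual and computational heart of the argument, and everything else is bookkeeping.

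As an independent check I would confirm that the proposed identity respects the symmetry $a\mapsto1/a$ enjoyed by both sides, and that setting $a=1$ recovers Theorem~\ref{th3}. I should also record an alternative, transformation-free route and why I do not make it primary: both sides of \eqref{q4t}, for fixed $|q|<1$, are meromorphic in $a$, and the left series terminates precisely at $a=q^{\pm(2m+1)}$ (where one of $(aq;q^2)_k$, $(q/a;q^2)_k$ eventually vanishes), reducing there to a terminating very-well-poised sum. One could try to prove those terminating instances (by a terminating summation, induction on $m$, or $q$-WZ) and then invoke analytic continuation in $a$. The obstruction is that these specialisations accumulate only at the punctures $a=0,\infty$, and both sides share poles at $a=q^{\pm6j}$, so the uniqueness step would require extra care --- matching the common $q$-difference behaviour in $a$ or clearing poles with a theta-function multiplier. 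For this reason the transformation-plus-${}_6\phi_5$ route is the cleaner one to pursue.
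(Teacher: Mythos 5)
Your reduction of the right-hand side of \eqref{q4t} to the nonterminating very-well-poised ${}_6\phi_5$ summation in base $q^6$ with $A=q$, $B=q^3$, $C=aq$, $D=q/a$ checks out: all eight infinite products match, and $|Aq^6/BCD|=|q|^2<1$ gives convergence. But this only rewrites the target; the entire content of the theorem is the equality of the left-hand series of \eqref{q4t} with that ${}_6\phi_5$, and this is precisely the step you leave open. You say you ``would seek'' the required base-changing transformation in the Gasper--Rahman catalogue or construct it from a Bailey pair, and you yourself identify this as the conceptual and computational heart of the argument. As written, then, the proposal has a genuine gap: nothing has been proved, only a (correct) reformulation of the right-hand side together with a promissory note for the one nontrivial ingredient. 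There is no generic mechanism that converts the quadratic exponent $q^{2k^2}$ and the mixed bases $q^2$, $q^4$, $q^6$ of the left-hand summand into a single-base, linear-argument series; a specific cubic transformation must be exhibited.

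For the record, the missing ingredient is exactly Rahman's cubic transformation, Eq.~(3.8.18) of Gasper--Rahman, taken in the limit $d\to0$, which yields the summation
\begin{align*}
&\sum_{k=0}^\infty\frac{(1-acq^{4k})(a;q)_k(q/a;q)_k (ac;q)_{2k}}
{(1-ac)(cq^3;q^3)_k(a^2cq^2;q^3)_k(q;q)_{2k}}\,q^{k^2}
\\ &\qquad
=\frac{(acq^2;q^3)_\infty(acq^3;q^3)_\infty(aq;q^3)_\infty(q^2/a;q^3)_\infty}
{(q;q^3)_\infty(q^2;q^3)_\infty(a^2cq^2;q^3)_\infty(cq^3;q^3)_\infty};
\end{align*}
replacing $q$ by $q^2$, setting $c=q/a$ and then substituting $aq$ for $a$ gives \eqref{q4t} directly, with no detour through the ${}_6\phi_5$. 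So your instinct about where to look is the right one and coincides with the paper's actual source, but until you exhibit and specialise that transformation the proof is incomplete. Your auxiliary remarks (the $a\mapsto1/a$ symmetry, the recovery of Theorem~\ref{th3} at $a=1$, and the correctly flagged obstructions to an analytic-continuation argument from the terminating specialisations $a=q^{\pm(2m+1)}$) are sound but do not close the gap.
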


\begin{proof}
In fact, there is not so much to prove here since the equality is a special case of the summation
\begin{align*}
&
\sum_{k=0}^\infty\frac{(1-acq^{4k})(a;q)_k(q/a;q)_k (ac;q)_{2k}}
{(1-ac)(cq^3;q^3)_k(a^2cq^2;q^3)_k(q;q)_{2k}}\,q^{k^2}
\\ &\qquad
=\frac{(acq^2;q^3)_\infty(acq^3;q^3)_\infty(aq;q^3)_\infty(q^2/a;q^3)_\infty}
{(q;q^3)_\infty(q^2;q^3)_\infty(a^2cq^2;q^3)_\infty(cq^3;q^3)_\infty}.
\end{align*}
which is the limiting $d\to0$ case of the cubic transformation due to Rahman \cite[Eq.~(3.8.18)]{GR04}.
Replacing $q$ with $q^2$, taking $c=q/a$ and then $aq$ for $a$ one arrives at~\eqref{q4t}.
\end{proof}

\begin{proof}[Proof of Theorem~\textup{\ref{th3}}]
Take $a=1$ in \eqref{q4t}.
\end{proof}

In the remainder we discuss the most nontrivial part of the method of creative telescoping\,---\,deduction of Theorem~\ref{th5} from Theorem~\ref{th6}.

\begin{lemma}
\label{lem-new}
Let $n$ be a positive odd integer. Then
\begin{equation}
\sum_{k=0}^{n-1}\frac{(q^{1-n};q^2)_k (q^{1+n};q^2)_k (q;q^2)_{2k}}{(q^2;q^2)_{2k}(q^{6-n};q^6)_k (q^{6+n};q^6)_k }[8k+1]q^{2k^2}
=q^{-(n-1)/2}[n]\biggl(\frac{-3}{n}\biggr).
\label{eq:lem3.1}
\end{equation}
\end{lemma}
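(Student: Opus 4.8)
The plan is to obtain Lemma~\ref{lem-new} as the specialisation $a=q^{-n}$ of the nonterminating identity~\eqref{q4t} in Theorem~\ref{th6}. The crucial observation is that when $a=q^{-n}$ with $n$ a positive odd integer, the factor $(q/a;q^2)_k=(q^{1+n};q^2)_k$ appearing in the numerator vanishes as soon as $1+n+2(k-1)>$ the relevant exponent forces a factor $1-q^{0}$; more precisely, writing out $(q^{1+n};q^2)_k=\prod_{j=0}^{k-1}(1-q^{1+n+2j})$, one checks that for $n$ odd this product contains the factor $1-q^{1+n+2j}$ which equals $1-q^{2m}$ type terms, and the series terminates precisely at $k=n-1$ because $(aq;q^2)_k=(q^{1-n};q^2)_k=\prod_{j=0}^{k-1}(1-q^{1-n+2j})$ vanishes once $1-n+2j=0$, i.e.\ at $j=(n-1)/2$, killing all terms with $k>(n-1)/2$. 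Thus first I would substitute $a=q^{-n}$ into both sides of~\eqref{q4t}, verify that the left-hand side collapses to the terminating sum $\sum_{k=0}^{n-1}$ displayed in~\eqref{eq:lem3.1}, and then evaluate the infinite product on the right-hand side at this special value.

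The second step is the evaluation of the right-hand side of~\eqref{q4t} at $a=q^{-n}$. Here the four $q$-shifted factorials $(aq^3;q^6)_\infty$, $(q^3/a;q^6)_\infty$, $(aq^6;q^6)_\infty$, $(q^6/a;q^6)_\infty$ specialise to $(q^{3-n};q^6)_\infty$, $(q^{3+n};q^6)_\infty$, $(q^{6-n};q^6)_\infty$, $(q^{6+n};q^6)_\infty$. The plan is to use the elementary shift relations for $(x;q^6)_\infty$ to pull out finite products and reduce the ratio to a closed form; since $n$ is coprime to $6$ (or at least odd, as the lemma only assumes), the exponents $3\pm n$ and $6\pm n$ interact with the residues modulo $6$ in a controlled way, and the infinite parts should cancel against $(q^2;q^6)_\infty(q^4;q^6)_\infty$ and the factors $(q^5;q^6)_\infty(q^7;q^6)_\infty$, leaving a finite rational expression in $q$. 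I expect this finite product to simplify, after using $[n]=(1-q^n)/(1-q)$ and collecting powers of $q$, to exactly $q^{-(n-1)/2}[n]\bigl(\frac{-3}{n}\bigr)$.

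The main obstacle, and the step requiring the most care, is the appearance of the Jacobi--Kronecker symbol $\bigl(\frac{-3}{n}\bigr)$ on the right-hand side. This sign is not visible at the level of the raw $q$-product; it emerges only after a careful sign bookkeeping in the cancellation of the $q^6$-shifted factorials, where the number of factors of the form $1-q^{6j+r}$ that survive depends on the residue class of $n$ modulo $6$. Concretely, $\bigl(\frac{-3}{n}\bigr)=+1$ when $n\equiv1\pmod6$ and $-1$ when $n\equiv5\pmod6$, and I anticipate that tracking the parity of the number of reflected factors (those with negative exponent, reindexed via $(q^{-m};q^6)_\infty$ identities that introduce a sign $(-1)$ and a power of $q$) will produce precisely this dichotomy. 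The cleanest way to organise this is to split into the two residue cases $n\equiv\pm1\pmod6$ from the outset, compute the finite product explicitly in each, and confirm that the power of $q$ assembles to $q^{-(n-1)/2}$ while the overall sign matches $\bigl(\frac{-3}{n}\bigr)$.
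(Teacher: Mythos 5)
Your proposal follows essentially the same route as the paper: specialise $a$ in the nonterminating identity \eqref{q4t} of Theorem~\ref{th6} (the paper takes $a=q^{n}$, you take $a=q^{-n}$; the two are interchangeable since both sides of \eqref{q4t} are invariant under $a\mapsto1/a$), observe that the series terminates, and evaluate the infinite product on the right-hand side.

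Two points need repair. First, your opening claim that $(q/a;q^2)_k=(q^{1+n};q^2)_k$ vanishes is false: for $n>0$ the exponents $1+n+2j$ are all positive, so no factor $1-q^{0}$ can occur there. The termination comes solely from $(aq;q^2)_k=(q^{1-n};q^2)_k$, which you do identify in the second half of the paragraph, so this is a recoverable slip rather than a fatal one. Second, and more substantively, the lemma assumes only that $n$ is odd, so the case $3\mid n$ is included, and there $\bigl(\frac{-3}{n}\bigr)=0$; your planned case split into $n\equiv\pm1\pmod 6$ omits it entirely. In that case $n\equiv3\pmod 6$, so the numerator factor $(q^{3-n};q^6)_\infty$ on the right-hand side of \eqref{q4t} contains the term $1-q^{0}=0$ (take $j=(n-3)/6$), and the product vanishes, in agreement with \eqref{eq:lem3.1}; the paper records exactly this trichotomy according to $n$ modulo $3$. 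With these corrections your outline coincides with the paper's proof of Lemma~\ref{lem-new}.
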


\begin{proof}
We substitute $a=q^{n}$ into \eqref{q4t}.
Then the left-hand side of \eqref{q4t} terminates at $k=(n-1)/2$, hence also equals the sum up to $n-1$ featured
on the left-hand side of \eqref{eq:lem3.1}.
On the other hand, the substitution transforms the right-hand side of \eqref{q4t} into
$q^{-(n-1)/2}[n]$ if $n\equiv1\pmod 3$, into $-q^{-(n-1)/2}[n]$ if $n\equiv2\pmod 3$, and into 0 if $3\mid n$.
\end{proof}

\begin{proof}[Proof of Theorem~\textup{\ref{th5}}]
Let $\zeta\ne1$ be a primitive $d$-th root of unity, where $d\mid n$ and $n>1$ is coprime with 6.
Denote by
$$
c_q(k)=[8k+1]\frac{(aq;q^2)_k(q/a;q^2)_k (q;q^2)_{2k}}{(q^2;q^2)_{2k}(aq^6;q^6)_k(q^6/a;q^6)_k}\,q^{2k^2}
$$
the $k$-th term of the sum \eqref{q4t} and write \eqref{q4t} as
\begin{equation}
\sum_{\ell=0}^{\infty}c_q(\ell d)\sum_{k=0}^{d-1}\frac{c_q(\ell d+k)}{c_q(\ell d)}
=\frac{(q;q^2)_\infty(q^6;q^6)_\infty(aq^3;q^6)_\infty(q^3/a;q^6)_\infty}
{(1-q)\,(q^2;q^2)_\infty(q^3;q^6)_\infty(aq^6;q^6)_\infty(q^6/a;q^6)_\infty}.
\label{q4u}
\end{equation}
Consider the limit as $q\to\zeta $ radially, that is, $q=r\zeta$ where $r\to1^-$.
On the left-hand side we get
$$
\lim_{q\to\zeta}\frac{c_q(\ell d+k)}{c_q(\ell d)}
=\frac{c_\zeta(\ell d+k)}{c_\zeta(\ell d)}
=c_\zeta(k)
$$
and
$$
\lim_{q\to\zeta}c_q(\ell d)
=\frac1{2^{4\ell}}\binom{4\ell}{2\ell}
\frac{(a\zeta;\zeta^2)_{\ell d}(\zeta/a;\zeta^2)_{\ell d}}{(a\zeta^6;\zeta^6)_{\ell d}(\zeta^6/a;\zeta^6)_{\ell d}}
=\frac1{2^{4\ell}}\binom{4\ell}{2\ell}.
$$
By Stirling's approximation, $\binom{4\ell}{2\ell}\sim2^{4\ell}/\sqrt{2\pi\ell}$ as $\ell\to\infty$, hence
$$
\sum_{\ell=0}^\infty\frac1{2^{4\ell}}\binom{4\ell}{2\ell}=\infty.
$$
For the right-hand side of \eqref{q4u},
\begin{align*}
&\lim_{q\to\zeta}\frac{(q;q^2)_{\ell d+k}(q^6;q^6)_{\ell d+k}(aq^3;q^6)_{\ell d+k}(q^3/a;q^6)_{\ell d+k}}
{(1-q)\,(q^2;q^2)_{\ell d+k}(q^3;q^6)_{\ell d+k}(aq^6;q^6)_{\ell d+k}(q^6/a;q^6)_{\ell d+k}}
\\
&\qquad
=\frac{(\zeta;\zeta^2)_k(\zeta^6;\zeta^6)_k(a\zeta^3;\zeta^6)_k(\zeta^3/a;\zeta^6)_k}
{(1-\zeta)\,(\zeta^2;\zeta^2)_k(\zeta^3;\zeta^6)_k(a\zeta^6;\zeta^6)_k(\zeta^6/a;\zeta^6)_k}
\end{align*}
for any $\ell\ge0$ and $0\le k<d$. Therefore, the expression on the right-hand side of \eqref{q4u}
is bounded above by
$$
\max_{0\le k<d}\frac{|(\zeta;\zeta^2)_k(\zeta^6;\zeta^6)_k(a\zeta^3;\zeta^6)_k(\zeta^3/a;\zeta^6)_k|}
{|(1-\zeta)\,(\zeta^2;\zeta^2)_k(\zeta^3;\zeta^6)_k(a\zeta^6;\zeta^6)_k(\zeta^6/a;\zeta^6)_k|}+1
$$
as $q\to\zeta$. By comparing the two asymptotics we conclude that
\begin{equation*}
\sum_{k=0}^{d-1}c_\zeta(k)=0
\end{equation*}
in turn implying that
\begin{equation*}
\sum_{k=0}^{n-1}c_\zeta(k)
=\sum_{k=0}^{d-1}c_\zeta(k)+\sum_{k=d}^{2d-1}c_\zeta(k)+\dots+\sum_{k=n-d}^{n-1}c_\zeta(k)
=\frac nd\sum_{k=0}^{d-1}c_\zeta(k)
=0.
\end{equation*}
This means that $\sum_{k=0}^{n-1}c_q(k)\equiv0\pmod{\Phi_d(q)}$ for any $d\mid n$, $d>1$, hence
\begin{equation*}
\sum_{k=0}^{n-1}\frac{(aq;q^2)_k (q/a;q^2)_k (q;q^2)_{2k}}{(q^2;q^2)_{2k}(aq^6;q^6)_k (q^6/a;q^6)_k }[8k+1]q^{2k^2}
\equiv0\equiv q^{-(n-1)/2}[n]\biggl(\frac{-3}{n}\biggr) \pmod{[n]}.
\end{equation*}
On the other hand, it follows from Lemma~\ref{lem-new} that
$$
\sum_{k=0}^{n-1}\frac{(aq;q^2)_k (q/a;q^2)_k (q;q^2)_{2k}}{(q^2;q^2)_{2k}(aq^6;q^6)_k (q^6/a;q^6)_k }[8k+1]q^{2k^2}
=q^{-(n-1)/2}[n]\biggl(\frac{-3}{n}\biggr)
$$
when $a=q^n$ or $a=q^{-n}$.
This implies that the congruences
$$
\sum_{k=0}^{n-1}\frac{(aq;q^2)_k (q/a;q^2)_k (q;q^2)_{2k}}{(q^2;q^2)_{2k}(aq^6;q^6)_k (q^6/a;q^6)_k }[8k+1]q^{2k^2}
\equiv q^{-(n-1)/2}[n]\biggl(\frac{-3}{n}\biggr)
$$
hold true modulo $1-aq^n$ and $a-q^n$.
Since $[n]$, $1-aq^n$ and $a-q^n$ are relatively prime polynomials, we obtain \eqref{q4a-new}.
\end{proof}

It is worth mentioning that the congruences in Theorem~\ref{th2}, \ref{th4} and \ref{th5} remain true when the sums are truncated at $(p-1)/2$ or $(n-1)/2$, respectively. These other(!) companion congruences are also settled in \cite{GZ19} by the method of creative telescoping.

\medskip
We can summarise our derivation path of the results as follows:
\begin{align*}
\text{Theorem~\ref{th6}}
& \;\;\underset{a=1}\Longrightarrow\;\; \text{Theorem~\ref{th3}} \;\;\underset{q\to1}\Longrightarrow\;\; \text{Theorem~\ref{th1}}
\\[-6pt]
\Downarrow\qquad &
\\
\text{Theorem~\ref{th5}}
& \;\;\underset{a=1}\Longrightarrow\;\; \text{Theorem~\ref{th4}} \;\;\underset{q\to1}\Longrightarrow\;\; \text{Theorem~\ref{th2}}
\end{align*}
The top of this scheme\,---\,Theorem~\ref{th6}\,---\,comes essentially for free from the book~\cite{GR04} nicknamed the $q$-Bible among the specialists in combinatorics and hypergeometric functions.
Many further entries from \cite{GR04} lead to remarkable (and quite difficult!) congruences, so that the $q$-Bible turns out to be a treasury book for number theory as well.

More recent work of V.~Guo, some in collaboration with M.~Schlosser, myself and others, extends the horizons of applicability of creative telescoping even further.
One of the latest achievements is a general framework (of $q$-analogues) of Dwork-type supercongruences~\cite{Guo19}.
It is already clear, though not well accepted by those who still have fun playing tricks with classical congruences, that the method is capable of proving more than we can even expect at this moment.
In addition, the method links the congruences, through asymptotic behaviour of $q$-series at roots of unity, to recent studies of mock theta functions and quantum modular forms.
It also shares similarities with the method from \cite{BTT18} that discusses algebraic relations of finite multiple zeta values via those available for $q$-analogues of multiple zeta values.

\bigskip
\noindent
\textbf{Acknowledgements.}
I am thankful to the organizers Masatoshi Suzuki and Takashi Nakamura
of the RIMS conference ``Analytic Number Theory and Related Topics''
(Kyoto University, Japan, 15--18~October 2019)
for invitation to give a talk at the meeting.
Special thanks go to my host Yasuo Ohno and his team from the Tohoku University (Sendai);
they made my stay in Japan both culturally and scientifically enjoyable.

I am incredibly indebted to Victor J.W.~Guo for permanent fruitful conversations on the subject and inspiration.


\begin{thebibliography}{9}

\bibitem{BTT18}
\textsc{H. Bachmann}, \textsc{Y. Takeyama} and \textsc{K. Tasaka},
Cyclotomic analogues of finite multiple zeta values,
\emph{Compos. Math.} \textbf{154} (2018), no.~12, 2701--2721.

\bibitem{GR04}
\textsc{G.~Gasper} and \textsc{M.~Rahman},
\emph{Basic hypergeometric series},
2nd edition, Encyclopedia Math. Appl. \textbf{96}
(Cambridge Univ. Press, Cambridge, 2004).

\bibitem{Guo19}
\textsc{V. J. W. Guo},
$q$-Analogues of Dwork-type supercongruences,
\emph{Preprint} \href{http://arxiv.org/abs/1910.07551}{\texttt{arXiv:\,1910.07551 [math.NT]}} (2019).

\bibitem{GZ19}
\textsc{V. J. W. Guo} and \textsc{W. Zudilin},
A $q$-microscope for supercongruences,
\emph{Adv. Math.} \textbf{346} (2019), 329--358.

\bibitem{Ra14}
\textsc{S.~Ramanujan},
Modular equations and approximations to~$\pi$,
\emph{Quart. J. Math. Oxford Ser.}~(2) \textbf{45} (1914), 350--372;
Reprinted in ``Collected papers of Srinivasa Ramanujan'',
G.\,H.~Hardy, P.\,V.~Sechu Aiyar, and B.\,M.~Wilson (eds.),
(Cambridge University Press, Cambridge, 1927), pp.~23--39.

\bibitem{VH97}
\textsc{L. Van Hamme},
Some conjectures concerning partial sums of generalized hypergeometric series,
in ``$p$-Adic Functional Analysis'' (Nijmegen, 1996),
\emph{Lecture Notes in Pure Appl. Math.} \textbf{192} (Dekker, New York, 1997), pp.~223--236.

\bibitem{Zu09}
\textsc{W.~Zudilin},
Ramanujan-type supercongruences,
\emph{J. Number Theory} \textbf{129} (2009), no.~8, 1848--1857.

\end{thebibliography}
\end{document}